\def\Cchi{{\raisebox{.2ex}{ \large $\chi$}}}
\newtheorem{prethm}{{\bf Theorem}}
\newenvironment{thm}{\begin{prethm}{\hspace{-0.5
em}{\bf.}}}{\end{prethm}}
\newtheorem{precor}{{\bf Corollary}}
\newtheorem{preprop}{{\bf Proposition}}
\newenvironment{prop}{\begin{preprop}{\hspace{-0.85
em}{\bf.}}}{\end{preprop}}
\newtheorem{preque}{{\bf Question}}
\newtheorem{preques}{{\bf Question}}
\newtheorem{prelemma}{{\bf Lemma}}
\newenvironment{lemma}{\begin{prelemma}{\hspace{-0.5
em}{\bf.}}}{\end{prelemma}}
\newtheorem{prelemm}{{\bf Lemma}}
\newtheorem{preex}{{\bf Example}}
\newtheorem{prepro}{{\bf Proposition}}
\newtheorem{preobs}{{\bf Observation}}
\newenvironment{obs}{\begin{preobs}{\hspace{-0.5
em}{\bf.}}}{\end{preobs}}
\newtheorem{preprob}{{\bf Problem}}
\newtheorem{prelem}{{\bf Theorem}}
\newtheorem{preproof}{{\bf Proof.}}
\newenvironment{proof}[1]{\begin{preproof}{\rm
               #1}\hfill{$\Box$}}{\end{preproof}}
\newtheorem{preconj}{{\bf Conjecture}}
\newenvironment{conj}{\begin{preconj}{\hspace{-0.5
em}{\bf.}}}{\end{preconj}}
\newtheorem{predeff}{{\bf Definition}}
\newenvironment{deff}{\begin{predeff}{\hspace{-0.85
em}{\bf.}}}{\end{predeff}}
\date{}
\title{{\large\bf On the Locating Chromatic Number of the Cartesian Product of Graphs}}
\author{
{\sc Ali Behtoei\footnote{{\small \it alibehtoei@math.iut.ac.ir}}}
~and
{\sc Behnaz Omoomi\footnote{{\small \it  bomoomi@cc.iut.ac.ir}}}\\
[1mm]
{\small \it  Department of Mathematical Sciences}\\
{\small \it  Isfahan University of Technology} \\
{\small \it 84156-83111, \ Isfahan, Iran}}
\begin{document}
\small \voffset=-20mm

\maketitle

\begin{abstract}
 Let $c$ be a proper $k$-coloring of a connected graph $G$  and
$\Pi=(V_1,V_2,\ldots,V_k)$ be an ordered partition of $V(G)$ into
the resulting color classes. For a vertex $v$ of $G$, the color
code of $v$ with respect to $\Pi$ is defined to be the ordered
$k$-tuple $c_{{}_\Pi}(v):=(d(v,V_1),d(v,V_2),\ldots,d(v,V_k)),$
where $d(v,V_i)=\min\{d(v,x)~|~x\in V_i\}, 1\leq i\leq k$. If
distinct vertices have distinct color codes, then $c$ is called a
locating coloring.  The minimum number of colors needed in a
locating coloring of $G$ is the locating chromatic number of $G$,
denoted by  $\Cchi_{{}_L}(G)$. In this paper, we study the
locating chromatic numbers of grids, the cartesian product of paths
and complete graphs, and the cartesian product of two complete
graphs.
\end{abstract}

%\bigskip
\noindent{\bf Keywords: }
 Cartesian product, Locating coloring, Locating chromatic number.

\section{Introduction}
 Let $G$ be a graph
without loops and multiple edges with vertex set $V(G)$ and edge
set $E(G)$. A proper $k$-coloring of $G$, $k\in \Bbb{N}$,  is a
function $c$ defined from $V(G)$ onto a set of colors
$[k]:=\{1,2,\ldots,k\}$ such that every two adjacent vertices have
different colors. In fact, for every $i$, $1\leq i\leq k$, the set
$c^{-1}(i)$ is a nonempty independent set of vertices which is
called the color class $i$. The minimum cardinality $k$ for which
$G$ has a proper $k$-coloring is the chromatic number of $G$,
denoted by $\Cchi(G)$. For a connected graph $G$, the distance
$d(u,v)$ between two vertices $u$ and $v$ in $G$ is the length of
a shortest path between them, and for a subset $S$ of $V(G)$, the
distance between $u$ and $S$ is given by
$d(u,S):=\min\{d(u,x)~|~x\in S\}$.

\begin{deff} {\rm \cite{XL}}
Let $c$ be a proper $k$-coloring of a connected graph $G$ and
$\Pi=(V_1,V_2,\ldots,V_k)$ be an ordered partition of $V(G)$ into
the resulting color classes. For a vertex $v$ of $G$, the color
code of $v$ with respect to $\Pi$ is defined to be the ordered
$k$-tuple $$c_{{}_\Pi}(v):=(d(v,V_1),d(v,V_2),\ldots,d(v,V_k)).$$
  If distinct vertices of $G$ have distinct color codes, then $c$ is
called a locating coloring of $G$. The locating chromatic number,
$\Cchi_{{}_L}(G)$,  is the minimum number of colors in a locating
coloring of $G$.
\end{deff}

The concept of locating coloring was first introduced and studied
by Chartrand et al. in \cite{XL}. They established some bounds for
the locating chromatic number of a connected graph. They also
proved that for a connected graph $G$ with $n\geq 3$ vertices, we
have $\Cchi_{_L}(G)=n$ if and only if $G$ is a complete
multipartite graph. Hence, the locating chromatic number of the
complete graph $K_n$ is $n$. Also for paths and cycles of order
$n\geq 3$ it is proved in \cite{XL} that $\Cchi_{_L}(P_n)=3$,
$\Cchi_{_L}(C_n)=3$ when $n$ is odd, and $\Cchi_{_L}(C_n)=4$ when
$n$ is even. The locating chromatic number of trees, Kneser
graphs, and the amalgamation of stars are studied in \cite{XL},
\cite{Behtoei}, and \cite{Asmiati}, respectively. For more
results in the subject and related subjects, see~\cite{Asmiati}
to \cite{Conditional}.

Obviously, $\Cchi(G)\leq \Cchi_{{}_L}(G)$. Note that the $i$-th
component of the color code of each vertex in the color class
$V_i$ is zero and its other components are non zero. Hence, a
proper coloring is a locating coloring whenever the color codes of vertices
in each color class are different. In a proper coloring of $G$, a
vertex is called {\it colorful} if all of the colors appear in its
closed neighborhood, and the color of a colorful vertex is called
a {\it full} color. Note that in each proper $m$-coloring of $K_m$ all of the vertices are colorful. We have the following observation.

\begin{obs}
Let $G$ be a connected graph. (a) In a locating coloring of $G$, there are no two colorful vertices that are assigned the same color. Therefore, if there is a locating $k$-coloring of $G$, then there are at most $k$ colorful vertices. (b) If $G$ contains two disjoint cliques of order $k$, then $\Cchi_{{}_L}(G)\geq k+1$.
\end{obs}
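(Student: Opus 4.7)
The plan is to exploit the rigid structure of the color code of a colorful vertex. If $v$ is colorful under a proper $k$-coloring $c$ and $c(v)=j$, then every color appears in the closed neighborhood $N[v]$, so $d(v,V_i)\in\{0,1\}$ for every $i$, with the unique $0$ sitting in coordinate $j$. Thus the color code of any colorful vertex is completely determined by its color; in other words, the map that sends a colorful vertex to its color code is a function of its color alone.

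For part (a), I would argue directly from this observation: if two colorful vertices $u$ and $v$ carried the same color $j$, their color codes would be the identical tuple with $0$ in position $j$ and $1$ elsewhere, contradicting the assumption that $c$ is a locating coloring. Hence in any locating $k$-coloring, distinct colorful vertices must receive distinct colors, which immediately caps the number of colorful vertices at $k$.

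For part (b), the key input is that every vertex of a $k$-clique $K$ is colorful in any proper $k$-coloring of $G$. Indeed, the $k$ pairwise adjacent vertices of $K$ must receive all $k$ colors, so each vertex of $K$ already sees all $k$ colors in its closed neighborhood. Now suppose, for contradiction, that $\chi_{{}_L}(G)\le k$. Since $G$ contains a $k$-clique we have $\chi(G)\ge k$, forcing $\chi_{{}_L}(G)=k$. Fix a locating $k$-coloring; the two disjoint $k$-cliques then produce $2k$ colorful vertices, contradicting the bound of $k$ obtained in part (a). Therefore $\chi_{{}_L}(G)\ge k+1$.

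There is no real obstacle here: both assertions follow from the single structural remark that a colorful vertex has a color code consisting of one $0$ (at its own color) and $(k-1)$ ones, so its code is pinned down by its color alone. The only care needed is to check that the hypothesis of part (b) guarantees all clique vertices are colorful in $G$ (not merely in the induced subgraph), which is automatic because the clique itself already supplies all $k$ colors within $N[v]$.
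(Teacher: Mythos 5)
Your proof is correct and is exactly the reasoning the paper leaves implicit (the Observation is stated without proof): a colorful vertex's code is one $0$ at its own color and $1$ elsewhere, so colorful vertices must get distinct colors, and two disjoint $k$-cliques under a locating $k$-coloring would force $2k$ colorful vertices. Nothing is missing; the step you flag --- that a $k$-clique under an exactly-$k$-coloring makes all its vertices colorful --- is handled correctly.
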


%Also note that if there
%exists no locating $k$-coloring for $G$, then there exists no
%locating $k'$-coloring for each $k'\leq k$ and hence,
%$\Cchi_{_L}(G)\geq k+1$.

Recall that the cartesian product of two graphs $G$ and $H$,
denoted by $G\square H$, is a graph with vertex set $V(G)\times
V(H)$ in which two vertices $(a,b)$ and $(a',b')$ are adjacent in
it whenever $a=a'$ and $bb'\in E(H)$, or $aa'\in E(G)$ and $b=b'$.
Vertices of the cartesian product $G\square H$ can be represented
by an $|V(G)|$ by $|V(H)|$ array, such that the induced subgraph
on the vertices of each row is isomorphic to $H$ and the induced
subgraph on the vertices of each column is isomorphic to $G$. In
this paper, we study the locating chromatic number of the grid
$P_m\square P_n$, $K_m\square P_n$ and $K_m\square K_n$.

\section{The locating chromatic numbers of $P_m\square P_n$ and $K_m\square P_n$}

In this section, we determine the exact value of the locating
chromatic number of the grid $P_m\square P_n$ and  $K_m\square
P_n$. First, we give an upper bound for the locating chromatic
number of the cartesian product of two arbitrary connected graphs.

\begin{prop} \label{upperbound}
If $G$ and $H$ are two connected graphs, then $\Cchi_{_L}(G\square H)\leq\Cchi_{_L}(G)\Cchi_{_L}(H)$.
\end{prop}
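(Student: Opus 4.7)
The natural approach is to build a locating coloring of $G\square H$ by taking the product of locating colorings on the two factors. Let $c_G$ be a locating coloring of $G$ with $k=\Cchi_{_L}(G)$ colors yielding classes $V_1^G,\ldots,V_k^G$, and let $c_H$ be a locating coloring of $H$ with $\ell=\Cchi_{_L}(H)$ colors yielding classes $V_1^H,\ldots,V_\ell^H$. Define a coloring $c$ of $G\square H$ with color set $[k]\times[\ell]$ by $c(a,b)=(c_G(a),c_H(b))$, so the color class indexed by $(i,j)$ is $V_{i,j}=V_i^G\times V_j^H$. Properness is immediate from the two types of edges in the Cartesian product: one coordinate is fixed and the other changes along an edge of the corresponding factor, where the factor coloring is proper.

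The key ingredient is the standard distance identity $d_{G\square H}((a,b),(a',b'))=d_G(a,a')+d_H(b,b')$, which yields the product-form color code
$$d_{G\square H}\bigl((a,b),V_{i,j}\bigr)=d_G(a,V_i^G)+d_H(b,V_j^H),$$
because minimizing a sum over an independent choice of $a'$ and $b'$ splits as the sum of the minima. Thus the color code of $(a,b)$ is completely determined by the code of $a$ under $c_G$ and the code of $b$ under $c_H$.

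To verify the locating property, suppose $(a,b)$ and $(a',b')$ share the same color code. For every $i,j$ we obtain
$$d_G(a,V_i^G)-d_G(a',V_i^G)=d_H(b',V_j^H)-d_H(b,V_j^H).$$
The left side depends only on $i$ and the right side only on $j$, so both are a common constant $\gamma$. Choosing $i$ with $a\in V_i^G$ forces $\gamma\le 0$, while choosing $i'$ with $a'\in V_{i'}^G$ forces $\gamma\ge 0$; hence $\gamma=0$. Consequently $a$ and $a'$ share the same code under $c_G$, so $a=a'$ by the locating property of $c_G$, and analogously $b=b'$. This shows $c$ is a locating coloring of $G\square H$ using $k\ell$ colors, proving the bound.

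The only substantive step is the constant-difference argument in the last paragraph; the rest is routine bookkeeping with the Cartesian product distance formula, and I expect no unexpected obstacles.
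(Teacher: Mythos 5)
Your proposal is correct and uses the same construction as the paper: the product color classes $V_i^G\times V_j^H$ together with the additivity of distances to product sets in $G\square H$. The only difference is cosmetic — the paper verifies the locating property by restricting to two vertices in a common class and exploiting the zero coordinate directly, while you handle arbitrary pairs via the constant-difference argument; both are valid and essentially the same approach.
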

\begin{proof}{
Let $m:=\Cchi_{_L}(G)$ and $A_1,A_2,...,A_m$ be the color classes
of a locating $m$-coloring of $G$. Also, let $n:=\Cchi_{_L}(H)$ and
$B_1,B_2,...,B_n$ be the color classes of a locating $n$-coloring of
$H$. For each $i\in[m]$ and each $j\in [n]$, $A_i\times B_j$ is an
independent set in $G\square H$. Hence, the partition $\{
A_i\times B_j~|~i\in[m],~j\in[n]\}$ of vertices of $G\square H$
can be considered as the color classes of a proper coloring of
$G\square H$. To see that this is a locating coloring, let $(a,b)$
and $(a',b')$ be two distinct vertices in the color class
$A_i\times B_j$ and, without loss of generality, assume that
$a\neq a'$. Note that $d(b,B_j)=d(b',B_j)=0$ while, by assumption, there exists $k\in [m]\setminus \{i\}$ such that
$d(a,A_k)\neq d(a',A_k)$. Hence
\begin{eqnarray*}
d((a,b),A_k\times B_j)&=& d(a,A_k)+d(b,B_j)\\
&=& d(a,A_k)+0 \\
&\neq & d(a',A_k)+0\\
&=& d((a',b'),A_k\times B_j).
\end{eqnarray*}
Thus, this coloring is a locating coloring.
 }\end{proof}
For $G=H=K_2$, we have
$$\Cchi_{_L}(K_2\square K_2)=\Cchi_{_L}(C_4)=4=\Cchi_{_L}(K_2)\Cchi_{_L}(K_2).$$
Therefore, the above inequality is attainable. The following theorem shows that
the exact value of the locating chromatic number of an $m$ by $n$
grid $P_m\square P_n$ is $4$, while the given upper bound is $9$.
\begin{thm}
If $n\geq m\geq 2$, then $\Cchi_{_L}(P_m\square P_n)=4$.
\end{thm}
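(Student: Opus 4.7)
The plan is to prove the two inequalities $\Cchi_{_L}(P_m\square P_n)\leq 4$ and $\Cchi_{_L}(P_m\square P_n)\geq 4$ separately, with the upper bound by an explicit construction and the lower bound by contradiction.

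For the upper bound, the idea is to promote two corner vertices to their own singleton color classes, so that two coordinates of every color code already suffice to pinpoint a vertex. Concretely I would define a coloring $c$ by setting $c(1,1)=3$, $c(1,n)=4$, and for every other vertex $c(i,j)=1$ if $i+j$ is even and $c(i,j)=2$ if $i+j$ is odd. Checking that $c$ is proper reduces to observing that the neighbors of the two promoted vertices inherit their original checkerboard colors in $\{1,2\}$, while $V_1, V_2$ stay non-empty whenever $m,n\geq 2$ (for instance $(2,2)\in V_1$ and $(2,1)\in V_2$). For the locating property, $V_3=\{(1,1)\}$ and $V_4=\{(1,n)\}$ are singletons, so for every vertex $v=(i,j)$
\[
d(v,V_3)=(i-1)+(j-1), \qquad d(v,V_4)=(i-1)+(n-j),
\]
and this pair of values determines $i$ and $j$ uniquely by solving a $2\times 2$ linear system; hence distinct vertices have distinct color codes.

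For the lower bound, I first record that $\Cchi_{_L}(P_m\square P_n)\geq 3$: in a proper $2$-coloring the code of any vertex depends only on its color and its distance to the opposite class, and for $m,n\geq 2$ the grid easily contains two same-colored vertices at equal distance from the opposite class. To rule out $\Cchi_{_L}=3$, I would assume toward contradiction that $c$ is a locating $3$-coloring and invoke Observation~1(a), which yields at most three colorful vertices, one of each color. I would then do a case analysis on the coloring of the induced $4$-cycle $C_0=\{(1,1),(1,2),(2,2),(2,1)\}$. When $C_0$ is $2$-colored, $(1,1)$ and $(2,2)$ share a color and both have distance $1$ to the other class in $C_0$, so they can be distinguished only through the third color class; analyzing where this third class can be placed, and combining with the symmetric argument at the opposite corner, forces either a repeated color code or two colorful vertices of the same color. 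When $C_0$ uses all three colors, the corner vertex and an adjacent vertex in $C_0$ are immediately colorful, and the analogous argument at another corner produces a second colorful vertex of the same color. Either outcome contradicts Observation~1(a).

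The main obstacle is the $2$-colored corner sub-case of the lower bound: there $(1,1)$ and $(2,2)$ carry the same color and have identical distances to every vertex of the second color class in $C_0$, so one must track carefully the placement of the third color to verify that the two $V_3$-distances coincide (or that two colorful same-color vertices are forced). The upper bound, by contrast, falls out immediately once one notices the singleton-class trick, since only the $V_3$ and $V_4$ coordinates of each code are needed.
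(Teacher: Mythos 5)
Your upper bound is exactly the paper's construction (promote the two corners $(1,1)$ and $(1,n)$ to singleton classes $3$ and $4$ on top of the checkerboard; the two distances $i+j-2$ and $n+i-j-1$ determine $(i,j)$), and that part is complete and correct.

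The lower bound, however, has a genuine gap, and you have identified it yourself without closing it: the sub-case where the corner $4$-cycle is $2$-colored. Restricting attention to the corner $C_4$ (or even to all four corners) cannot work as stated. Consider a hypothetical locating $3$-coloring in which color $3$ occurs only far from every corner: then $(1,1)$ and $(2,2)$ share a color and both are at distance $1$ from the second class, but their distances to $V_3$ may very well differ, so no local contradiction arises at the corner, and the ``symmetric argument at the opposite corner'' does not obviously produce a coincidence of codes either. The missing idea --- which is the paper's entire lower-bound argument --- is to look for a $3$-colored induced $C_4$ \emph{anywhere} in the grid rather than at a fixed corner: if every unit square were $2$-colored, then opposite vertices of each unit square would agree, the checkerboard pattern would propagate across adjacent squares, and the whole coloring would use only two colors, contradicting the fact that a proper $3$-coloring is onto $[3]$. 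Hence some unit square carries all three colors; its two same-colored (opposite) vertices each see the other two colors in their closed neighborhoods, so they are two colorful vertices of the same color, contradicting Observation~1(a). With that global existence step in place your argument closes; without it, the $2$-colored-corner case remains unproved. (A minor additional slip: in the $3$-colored $C_4$ case the two vertices that matter are the \emph{opposite} pair sharing a color, not ``the corner vertex and an adjacent vertex,'' since adjacent vertices necessarily have different colors and cannot violate Observation~1(a).)
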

\begin{proof}{
In each proper $3$-coloring of $P_m\square P_n$ there exists an
induced cycle $C_4$ with $3$ colors. Hence, there are two colorful
vertices on this cycle with the same color. Therefore,
$\Cchi_{_L}(P_m\square P_n)\geq 4$.\\ For each $i\in[m]$ and
$j\in[n]$, let $v_{i,j}$ be the vertex in the $i$-th row and
$j$-th column of the grid $P_m\square P_n$, and let $c$ be a
proper $2$-coloring of  the bipartite graph $P_m\square P_n$ with
the color set $\{1,2\}$. Define the coloring $c'$ as $c'(v_{1,1})=3$,
$c'(v_{1,n})=4$ and $c'(v_{i,j})=c(v_{i,j})$ else where. For each
$i\in[m]$ and $j\in[n]$, we have
$$d(v_{i,j},v_{1,1})=i+j-2, ~~d(v_{i,j},v_{1,n})=n+i-j-1.$$
Thus, distinct vertices have distinct color codes with respect to
the coloring $c'$.
 }\end{proof}
Let $G:=K_m\square P_n$. Vertices of $G$ can be represented by an
$m$ by $n$ array. Thus, $G$ consists of $m$ rows and $n$ columns,
in which the induced subgraph on the vertices of each column is
isomorphic to $K_m$ and the induced subgraph on the vertices of
each row is isomorphic to $P_n$. Let $v_{i,j}$ be the vertex of
$G$ in the $i$-th row and $j$-th column. Hence, each coloring of
$G$ can be represented by an $m\times n$ matrix, in which its
$(i,j)$-entry is the color of $v_{i,j}$. For the locating
chromatic number of $K_m\square P_n$, the following cases are
easy to check (see Theorem 1 and \cite{XL}).
   \begin{itemize}
   \item [(a)] $\Cchi_{_L}(K_1\square P_1)=1$, $\Cchi_{_L}(K_1\square P_2)=2$, and $\Cchi_{_L}(K_1\square
   P_n)=3$, $n\geq 3$.
   \item [(b)] $\Cchi_{_L}(K_2\square P_n)=\Cchi_{_L}(P_2\square
   P_n)=4$.
   \item [(c)] $\Cchi_{_L}(K_m\square P_1)=\Cchi_{_L}(K_m)=m$.
   \end{itemize}
In the following theorem, the exact value of $\Cchi_{_L}(K_m\square
P_n)$ is computed in the remaining general case.
\begin{lemma} \label{ConsecutiveColumns}
Let $m\geq 3$ and $n\geq 2$ be two positive integers. If there exists a locating $(m+1)$-coloring of $G:=K_m\square P_n$, then let $C$ be its coloring matrix. Then every two consecutive columns of $C$ have
different missing colors. Moreover, if $m\geq 5$, then every two
columns of $C$ have different missing colors.
\end{lemma}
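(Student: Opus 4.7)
The plan pivots on a single structural observation: each column of $G=K_m\square P_n$ is a copy of $K_m$, so in any $(m+1)$-coloring every column uses exactly $m$ of the available colors and misses exactly one. If column $j$ misses color $k$ and $v_{i,j}$ has color $c$, then for every $c'\notin\{c,k\}$ the color $c'$ appears in some other row of the same column and hence at distance $1$ from $v_{i,j}$, while $d(v_{i,j},V_c)=0$. Thus the color code of $v_{i,j}$ is completely determined by the pair $(c,d(v_{i,j},V_k))$. Using the cartesian-product distance formula---so that $d(v_{i,j},v_{a,j'})$ equals $|j-j'|$ when $i=a$ and $|j-j'|+1$ otherwise---one checks that $d(v_{i,j},V_k)\in\{d_j,d_j+1\}$, where $d_j$ is the smallest $|j-j'|$ over columns $j'$ containing $k$; setting $A_j:=\{i:d(v_{i,j},V_k)=d_j\}$, this set is contained in the (at most two) rows hosting $k$ in columns at distance exactly $d_j$ from $j$, so $|A_j|\le 2$.

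If columns $j_1<j_2$ both miss $k$, then for each color $c\ne k$ the unique $c$-colored vertices $v_{i_1(c),j_1}$ and $v_{i_2(c),j_2}$ have color codes agreeing in every coordinate except possibly the $k$-th; locating then forces $d(v_{i_1(c),j_1},V_k)\ne d(v_{i_2(c),j_2},V_k)$ for every $c\ne k$. This single constraint drives both parts. For the first part, assume consecutive columns $j,j+1$ share missing color $k$. The general bound $|d_j-d_{j+1}|\le 1$ yields two sub-cases. When $d_j=d_{j+1}=d$, a short interval argument shows no column in $[j-d+1,j+d]$ contains $k$, so the right-side candidate for $A_j$ and the left-side candidate for $A_{j+1}$ both vanish, leaving $|A_j|,|A_{j+1}|\le 1$; however, pairing off colors of distance $d$ in one column against those of distance $d+1$ in the other forces $|A_j|+|A_{j+1}|=m$, so $m\le 2$, contradicting $m\ge 3$. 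When $d_{j+1}=d_j+1$ (the remaining sub-case, up to symmetry), one checks $A_j=\{a_{j-d_j}\}$ and $a_{j-d_j}\in A_{j+1}$; requiring the locating condition at the overlap value $d_j+1$ then forces $|A_{j+1}|\le 1$, hence $A_{j+1}=\{a_{j-d_j}\}$, whereupon the color of $v_{a_{j-d_j},j+1}$ is forced to equal that of $v_{a_{j-d_j},j}$, contradicting their adjacency along the path.

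For the second part, the first part tells us no two consecutive columns share a missing color, so any column $j$ missing $k$ has both its neighbors (where they exist) containing $k$, and hence $d_j=1$. For any two columns $j_1,j_2$ missing $k$ we therefore have $d_{j_1}=d_{j_2}=1$, and the equal-distance counting from the first part applies verbatim: locating forces $|A_{j_1}|+|A_{j_2}|=m$ while $|A_{j_p}|\le 2$, so $m\le 4$, contradicting $m\ge 5$. The main obstacle is the second sub-case of the first part, where one has to combine the identity $a_{j-d_j}\in A_{j+1}$, the overlap-forced bound $|A_{j+1}|\le 1$, and the adjacency of $v_{a_{j-d_j},j}$ and $v_{a_{j-d_j},j+1}$ to extract the contradiction; the $m\ge 5$ statement then falls out as a one-line corollary of the equal-distance counting.
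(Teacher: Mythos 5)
Your argument is correct and follows essentially the same route as the paper's: both exploit the fact that the color code of a vertex lying in a column whose missing color is $k$ is determined by its own color together with its distance to the class $V_k$, and both then split into cases according to whether the nearest occurrences of $k$ seen from the two offending columns are equidistant or differ by one. Your packaging via $d_j$ and the sets $A_j$ (playing $|A_j|+|A_{j+1}|=m$ against $|A_j|,|A_{j+1}|\le 1$ in the equidistant case, and forcing two adjacent vertices of the path to receive the same color in the other) is a cleaner, uniform rendering of the paper's explicit vertex-pair case analysis, which is written out only for $m=3$ and declared ``similar'' for $m\ge 4$; the $m\ge 5$ part is the same pigeonhole count in both proofs.
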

\begin{proof}{
First, let $m=3$. Suppose on the contrary, there exist two
consecutive columns $C_j$ and $C_{j+1}$ of $C$ with the same
missing color, say $``4"$. Assume that $C_j=[1~2~3]^T$. The
coloring is proper and hence $C_{j+1}$ is a derangement of $C_j$,
which implies $C_{j+1}=[3~1~2]^T$ or $C_{j+1}=[2~3~1]^T$. Without
loss of generality, assume that $C_{j+1}=[3~1~2]^T$. If $j=1$ and
$C_{j'}$ is the first column which the color $4$ appears in it,
say in row $i$, then two vertices $v_{i,1}$ and $v_{i+1,2}$ ($i+1$
considered modulo $m$) have the same distance to the color class
$4$ and hence, have the same color code, which is a contradiction.
For $j+1=n$, the argument is similar. Also, when all of the
columns containing the color $4$ have index greeter than $j+1$, or
all have index smaller than $j$, the argument is similar to above.
Thus, assume there exist two indices $j_1$ and $j_2$,
$j_1<j<j+1<j_2$, such that the color $4$ appears in both of the
columns $C_{j_1}$ and $C_{j_2}$, and does not appear in the columns
with indices $j_1<k<j_2$. If $j-j_1=j_2-(j+1)$, then exactly four
vertices in the $j$-th and $(j+1)$-th columns of $G$ have the
same distance $j-j_1+1$ to the color class $4$, and hence, at
least two vertices of the same color have the same color code.
Thus, without loss of generality, we can assume that
$j-j_1<j_2-(j+1)$, and the color $4$ appears in the first row of
the column $C_{j_1}$. Now two vertices $v_{3,j}$ and $v_{1,j+1}$
have the same distance $j-j_1+1$ to the color class $4$ and hence
have the same color codes, which is a contradiction. For $m\geq
4$, if two consecutive columns have the same missing colors, then
by an argument similar to the above, one can find two
vertices with the same color codes, which is impossible.\\ Now let
$m\geq 5$ and suppose on the contrary that there exist two (non
consecutive) columns $C_j$ and $C_{j'}$ with the same missing
color, say $``1"$. We know that there are no two consecutive
columns with the same missing colors and hence each of these two
columns contains at least one and at most two full colors.
Therefore, since $m\geq 5$, there are two vertices $v$ and $v'$ of the same color in
the $j$-th and $j'$-th columns of $G$, respectively, such that $v$ and $v'$ are not adjacent to a vertex colored 1. Thus, $v$ and $v'$ have the same color codes, which is a contradiction.
 }\end{proof}
\begin{thm}
Let  $m\geq 3$ and $n\geq2$ be two positive integers. Then
\begin{eqnarray*}
\Cchi_{_L}(K_m\square P_n)= \left\{
\begin{array}{ll}
m+2 & ~~{\rm if}~m\leq n-2, \\
m+1 & ~~{\rm if}~m\geq n-1.
\end{array}
\right.
\end{eqnarray*}
\end{thm}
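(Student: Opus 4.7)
The strategy is to prove matching lower and upper bounds in each regime, using Observation~1 and Lemma~1 for the lower bounds and explicit colorings for the upper bounds. First, the universal lower bound $\Cchi_{_L}(K_m\square P_n)\ge m+1$ follows at once from Observation~1(b), since the first two columns form two vertex-disjoint copies of $K_m$. This settles the lower bound in the case $m\ge n-1$. To sharpen it to $\Cchi_{_L}\ge m+2$ when $m\le n-2$, I would assume for contradiction a locating $(m+1)$-coloring with coloring matrix $C$; each column, being a $K_m$, uses $m$ distinct colors and misses exactly one. For $m\ge 5$ the second part of Lemma~1 forces the $n$ missing colors of the columns of $C$ to be pairwise distinct, contradicting $n\ge m+2>m+1$. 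For $m\in\{3,4\}$ only the consecutive-column part of Lemma~1 is at hand, so a pigeonhole pass over $n\ge m+2$ columns yields two non-consecutive columns $C_{j_1},C_{j_2}$ sharing a missing color; I would then imitate the argument in the proof of Lemma~1 by locating, in the columns on either side of the block $[j_1,j_2]$ containing the shared missing color, two same-colored vertices in $C_{j_1}$ and $C_{j_2}$ whose rows equalize every remaining coordinate of the color code, producing a contradiction.

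For the upper bound $\Cchi_{_L}\le m+2$ when $m\le n-2$, I would modify the proper $m$-coloring $c_0(v_{i,j})=((i+j-2)\bmod m)+1$ by recoloring the two corners $v_{1,1}$ and $v_{1,n}$ with the new colors $m+1$ and $m+2$. The resulting singleton classes act as distance markers: for every other vertex $v_{i,j}$ the last two coordinates of its color code are $d(v_{i,j},v_{1,1})=(j-1)+\epsilon_i$ and $d(v_{i,j},v_{1,n})=(n-j)+\epsilon_i$, where $\epsilon_i\in\{0,1\}$ equals $0$ iff $i=1$. Their sum recovers $\epsilon_i$ and their difference recovers $j$; then $\epsilon_i$ together with the base color $c_0(v_{i,j})$ (which determines $i+j\pmod m$) fixes $i$. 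Thus any two same-colored vertices with equal codes coincide, and propriety near the two recolored corners is a routine check because the markers take values outside $[m]$.

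For the upper bound $\Cchi_{_L}\le m+1$ when $m\ge n-1$, I would exhibit an explicit locating $(m+1)$-coloring in which every column has a distinct missing color, the one of column $j$ being some element of $[m+1]$. A naive cyclic shift $c(v_{i,j})=((i+j-1)\bmod(m+1))+1$ is \emph{not} locating in general (a direct check shows failure already for $K_3\square P_4$), so within each column the $m$ colors of $[m+1]\setminus\{\text{missing}\}$ must be rearranged to break the rotational symmetry along rows. Once an arrangement with that property is chosen, two equal-colored vertices in distinct columns $j\ne j'$ are separated by the color-code coordinate indexed by one of the two distinct missing colors, since that coordinate equals $1$ in the column containing that color and is forced to exceed $1$ (by the row-breaking arrangement) in the one missing it.

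The main obstacle is the lower bound in the small cases $m\in\{3,4\}$, where Lemma~1 alone is too weak and one must carry out the delicate hand analysis sketched above to preclude a locating $(m+1)$-coloring. The other steps are either direct consequences of Observation~1 / Lemma~1 or concrete colorings whose verification is essentially mechanical.
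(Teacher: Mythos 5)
Your overall skeleton (lower bounds from Observation 1(b) and Lemma 1, explicit colorings for the upper bounds) matches the paper's, and your $(m+2)$-coloring for $m\le n-2$ --- recoloring the corners $v_{1,1},v_{1,n}$ with two fresh colors and recovering $j$ and the row-indicator from the two singleton coordinates --- is correct and arguably cleaner than the paper's alternating-column construction. However, there are two genuine gaps. The first is the lower bound $\Cchi_{_L}(K_m\square P_n)\ge m+2$ for $m\in\{3,4\}$, which you leave as a "delicate hand analysis": a pigeonhole gives two non-consecutive columns with the same missing color, but two same-colored vertices in those columns are distinguished precisely when their distances to that shared missing color class differ, which can genuinely happen, so the imitation of Lemma 1's proof does not obviously close. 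You have missed the counting argument that makes the small cases unnecessary: since (by Lemma 1) adjacent columns have distinct missing colors, the missing color of column $j$ appears in an adjacent column, so \emph{every} column contains a colorful vertex; by Observation 1(a) a locating $(m+1)$-coloring has at most $m+1$ colorful vertices, all of distinct colors, whence $n\le m+1$. This works uniformly for all $m\ge 3$ and is exactly the paper's route.

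The second gap is the upper bound $\Cchi_{_L}(K_m\square P_n)\le m+1$ for $m\ge n-1$, where you only assert that a suitable "row-breaking" arrangement with pairwise distinct missing colors exists. The claim that the coordinate indexed by the missing color of column $j'$ "is forced to exceed $1$" holds only for a \emph{non-colorful} vertex of that column; a colorful vertex there has that coordinate equal to $1$ and is not separated from an equal-colored colorful vertex elsewhere. One must therefore additionally arrange that each column contains exactly one colorful vertex and that distinct columns have distinct full colors, and producing such a matrix for every $n\le m+1$ is the bulk of the paper's proof: explicit matrices for $m\in\{3,4\}$, and for $m\ge 5$ an inductive column-by-column filling with several repair cases near the last columns when the remaining full/missing colors collide. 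As written, the existence of the required coloring is assumed rather than established, so the $m\ge n-1$ direction is incomplete.
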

\begin{proof}{
Let $G:=K_m\square P_n$. By Observation 1(b), we have $\Cchi_{_L}(G)\geq m+1$.\\
Now we give a locating $(m+2)$-coloring of $G$. Let the first
column of the corresponding coloring matrix be the column vector
$[(m+1)~1~2~3~...~(m-2)~(m+2)]^T$, and the remaining columns be
alternately $[1~2~3~...~m]^T$ and $[m~1~2~3~...~(m-1)]^T$. Then,
no two distinct vertices with the same color have the same
distances to both of the color classes $m+1$ and $m+2$. Hence,
this is a locating coloring of $G$. Therefore, $\Cchi_{_L}(G)=m+1$
or $\Cchi_{_L}(G)=m+2$.\\ First we show that if
$\Cchi_{_L}(G)=m+1$, then $m\geq n-1$.
Therefore, if $m\leq n-2$, then $\Cchi_{_L}(G)=m+2$.\\
Assume that $\Cchi_{_L}(G)=m+1$ and let $C$ be the corresponding
matrix of a locating $(m+1)$-coloring of $G$. Note that $C$ has
$m$ rows and in each column exactly one color is missing. By Lemma
\ref{ConsecutiveColumns}, no two consecutive columns of $C$ have
the same missing color, and hence each column of $C$ contains at
least one full color.
This implies that $G$ has at most $m+1$ columns, i.e. $n\leq m+1$ as desired.\\
To complete the proof, we assume $m\geq n-1$ and show that
$\Cchi_{_L}(G)=m+1$. For $m\in \{3,4\}$, consider two colorings of
$K_3\square P_4$ and $K_4\square P_5$ with the corresponding
matrices $A_1$ and $A_2$, respectively, as follows.
\begin{eqnarray*}
A_1=\left [
\begin{array}{llll}
1 & 4 & 2 & 3 \\
2 & 1 & 4 & 1 \\
3 & 2 & 3 & 4
\end{array}
\right] , ~~A_2=\left [
\begin{array}{lllll}
1 & 5 & 1 & 5 & 4 \\
2 & 3 & 5 & 3 & 5 \\
3 & 1 & 2 & 4 & 2 \\
4 & 2 & 4 & 1 & 3
\end{array}
\right] .
\end{eqnarray*}
Note that in these colorings distinct columns have distinct
missing colors and hence, two vertices with the same color have
distinct color codes except when both of them are colorful. There
are exactly $m+1$ colorful vertices (with distinct colors).  Thus,
these colorings are locating. Also note that removing columns from
the end will not create new full colors in the remaining matrices.
Thus, for $m\in \{3,4\}$ and $n\leq m+1$, we have
$\Cchi_{_L}(K_m\square P_n)=m+1$.\\
Now let $m\geq 5$. By Lemma \ref{ConsecutiveColumns}, in the
corresponding matrix of each locating $(m+1)$-coloring of $G$, if
it exists, there are no two distinct columns with the same missing
color. In an inductive way, we give a locating $(m+1)$-coloring of
$G$. Equivalently, we fill the columns of an $m$ by $n$ matrix $C$
with entries in $[m+1]$ in such a way that each column contains
exactly one full color, distinct columns have distinct full
colors, and the missing colors of no two columns are the same.
These coloring will be locating since there are no two colorful
vertices with the same color and, two non-colorful vertices with
the same color are in different columns and are non-adjacent to
different color classes. We construct this coloring matrix for
$n=m+1$, then for
smaller $n$ one can remove extra columns from the end.\\
Let $C_1:=[1~2~3~...~m]^T$ and $C_2:=[(m+1)~1~2~3~...~(m-1)]^T$ be
the first and second columns of $C$, respectively. Now assume that
$p$-th column of $C$ is $C_p=[x_1~x_2~x_3~...~x_m]^T$ with the
missing color $x_{m+1}$ and, without loss of generality, with the full
color $x_1$, where
$\{x_1,x_2,x_3,...,x_{m+1}\}=\{1,2,3,...,m+1\}$. Next, we fill the
column $C_{p+1}$. Since $C_p$ should contain exactly one full
color, the color $x_{m+1}$ should appear in the
first row of $C_{p+1}$. \\
For each $t\geq 1$, let $C_t^F$ and $C_t^M$ be the singleton sets that
contain the full color and the missing color of the column $C_t$,
respectively. If $x_1$ is not the missing color of one of the
previous columns, and $x_{m+1}$ is not the full color of one of
the previous columns, then let
$C_{p+1}:=[x_{m+1}~x_m~x_2~x_3~x_4~...~x_{m-1}]^T$, which means
$C_{p+1}^M=\{x_1\}$ and $C_{p+1}^F=\{x_{m+1}\}$. Otherwise,
$x_1\in \bigcup_{t=1}^{p}C_t^M$ or $x_{m+1}\in
\bigcup_{t=1}^{p}C_t^F$. If $p+1<n$, then there are at least two
colors not in $\bigcup_{t=1}^{p}C_t^F$ and there are at least two
colors not in $\bigcup_{t=1}^{p}C_t^M$. Therefore, there exists a
color $x_j\notin \bigcup_{t=1}^{p}C_t^F$, where $x_j\neq x_{m+1}$.
Also, there exists a color $x_i\notin \bigcup_{t=1}^{p}C_t^M$,
where $x_i\notin \{x_1,x_j\}$. Choose $x_j$ as the full color, and
$x_i$ as the missing color of $C_{p+1}$. Since $m\geq 5$, it is
possible to fill the column $C_{p+1}$ in such a way that its first
row is $x_{m+1}$ and its $i$-th row is $x_j$. Here after assume
that $p+1=n$, and hence there is only one color not in
$\bigcup_{t=1}^{p}C_t^F$ and only one color
not in $\bigcup_{t=1}^{p}C_t^M$. Following two cases may occur.\\

\noindent \textbf{Case 1. } $[n]\setminus
\bigcup_{t=1}^{n-1}C_t^M=\{x_1\}$ and
$[n]\setminus\bigcup_{t=1}^{n-1}C_t^F=\{x_i\}$, where $x_i\neq
x_{m+1}$.

\medskip \vspace*{1mm}
\noindent In this case, we could change some of the previous
columns to get the desired coloring. Assume that
$C_{n-2}^F=\{x_s\}$ and $C_{n-2}^M=\{x_j\}$. Note that there are
no repeated full colors or repeated missing colors, but it may be that
$x_s=x_{m+1}$ or $x_i=x_j$.
     \begin{itemize}
     \item[(a)] If $x_s\neq x_{m+1}$ and $x_i=x_j$, then let $x_j,
     x_1$ and $x_{m+1}$ be the missing colors of $C_{n-2}, C_{n-1}$ and
     $C_n$,
     respectively. Also let $x_1,
     x_j$ and $x_{s}$ be the full colors of $C_{n-2}, C_{n-1}$ and
     $C_n$,
     respectively. Now fill $C_{n-2}$ such that $x_1$ in $C_{n-2}$ and $x_j$ in
     $C_{n-3}$ are in the same row, and then fill $C_{n-1}$ such
     that $x_i=x_j$ in $C_{n-1}$ and $x_1$ in $C_{n-2}$ are in the
     same row. Finally, fill $C_{n}$ such
     that $x_1$ in $C_{n}$ and $x_i$ in $C_{n-1}$ are in the
     same row, and also $x_s$ in $C_{n}$ and $x_{m+1}$ in $C_{n-1}$ are in the
     same row.
     \item[(b)] If $x_s=x_{m+1}$ and $x_i\neq x_j$, then let $C_{n-2}^M=\{x_1\},
     C_{n-1}^M=\{x_s\}, C_n^M=\{x_j\}$ and $C_{n-2}^F=\{x_s\},
     C_{n-1}^F=\{x_1\}, C_n^F=\{x_i\}$.
     Now fill $C_{n-2}$ such that $x_1$ in $C_{n-2}$ and $x_s$ in
     $C_{n-3}$ are in the same row. Then fill $C_{n-1}$ such
     that $x_s$ in $C_{n-1}$ and $x_1$ in $C_{n-2}$ are in the
     same row. Finally, fill $C_{n}$ such
     that $x_1$ in $C_{n}$ and $x_s$ in $C_{n-1}$ are in the
     same row, and also $x_i$ in $C_{n}$ and $x_j$ in $C_{n-1}$ are in the
     same row..
     \item[(c)] If $x_s\neq x_{m+1}$ and $x_i\neq x_j$, then consider the following two cases.
     \\ If $C_{n-3}^F\neq \{x_j\}$, then let
     $$C_{n-2}^M=\{x_j\},~C_{n-1}^M=\{x_{m+1}\},~C_n^M=\{x_1\},$$
     $$C_{n-2}^F=\{x_1\},~C_{n-1}^F=\{x_s\},~C_n^F=\{x_i\}.$$
     If $C_{n-3}^F=\{x_j\}$, then (by assuming $C_{n-3}^M=\{x_l\}$) let
     $$C_{n-3}^M=\{x_l\},~C_{n-2}^M=\{x_{m+1}\},~C_{n-1}^M=\{x_j\},~C_n^M=\{x_1\},$$
     $$C_{n-3}^F=\{x_j\},~C_{n-2}^F=\{x_1\},~C_{n-1}^F=\{x_i\},~C_n^F=\{x_s\}.$$
     \item[(d)] If $x_s=x_{m+1}$ and $x_i=x_j$, then we
     should change the column $C_{n-3}$. Assume that $C_{n-3}^F=\{x_l\}$ and $C_{n-3}^M=\{x_k\}$.
     Note that $x_l\notin \{x_1,x_i,x_{m+1}\}$ and $x_k\notin \{x_1,x_i,x_{m+1}\}$,
     since there are no repeated full colors or repeated missing
     colors. For the desired coloring, let 
     $$C_{n-3}^M=\{x_1\},~C_{n-2}^M=\{x_k\},~
     C_{n-1}^M=\{x_i\},~ C_n^M=\{x_{m+1}\},$$
     $$C_{n-3}^F=\{x_i\},~ C_{n-2}^F=\{x_{m+1}\},~
     C_{n-1}^F=\{x_1\},~ C_n^F=\{x_l\}.$$
     \end{itemize}

\noindent \textbf{Case 2. } $[n]\setminus
\bigcup_{t=1}^{n-1}C_t^M=\{x_i\}$ and
$[n]\setminus\bigcup_{t=1}^{n-1}C_t^F=\{x_{m+1}\}$, where $x_i\neq
x_1$.

\medskip
\noindent We should change $C_{n-2}$ to get the desired coloring.
Assume that $C_{n-2}^F=\{x_s\}$ and $C_{n-2}^M=\{x_j\}$. Note that
$x_j\notin \{x_1,x_i,x_{m+1}\}$ and $x_s\notin \{x_1,x_{m+1}\}$,
     since there are no repeated full colors or repeated missing
     colors. But it may be that $x_i=x_s$.
     \begin{itemize}
     \item[(a)] If $x_i\neq x_s$, then consider the following two cases.
     \\ If $C_{n-3}^M\neq\{x_1\}$, then let
     $$C_{n-2}^M=\{x_{m+1}\},~C_{n-1}^M=\{x_i\},~C_n^M=\{x_j\},$$
     $$C_{n-2}^F=\{x_1\},~C_{n-1}^F=\{x_s\},~C_n^F=\{x_{m+1}\}.$$
     Now assume that $C_{n-3}^M=\{x_1\}$ and $C_{n-3}^F=\{x_l\}$.
     It is possible that $x_l\in\{x_i,x_j\}$. Without loss of generality, assume that $x_l\neq x_i$
     (when $x_l=x_i$, we replace $x_i$ by $x_j$). Let
     $$C_{n-3}^M=\{x_{m+1}\},~C_{n-2}^M=\{x_i\}
     ,~C_{n-1}^M=\{x_1\},~C_n^M=\{x_j\},$$
     $$C_{n-3}^F=\{x_1\},~C_{n-2}^F=\{x_l\},
     ~C_{n-1}^F=\{x_s\},~C_n^F=\{x_{m+1}\}.$$
     \item[(b)] If $x_i=x_s$, then let 
     $$C_{n-2}^M=\{x_{m+1}\},~C_{n-1}^M=\{x_s\},~ C_n^M=\{x_j\},$$
     $$C_{n-2}^F=\{x_s\},~C_{n-1}^F=\{x_{m+1}\},~ C_n^F=\{x_1\}.$$
     \end{itemize}
Note that since $m\geq 5$, in all of the previous steps it is possible to fill each column in the desired manner.
 }\end{proof}

\section{The locating chromatic number of $K_m\square K_n$}

In this section, we study the cartesian product of complete
graphs. Let $G:=K_m\square K_n$. Vertices of $G$ can be considered
as the entries of an $m$ by $n$ matrix, such that the induced
subgraph on the vertices of each column is isomorphic to $K_m$ and
the induced subgraph on each row is isomorphic to $K_n$. Let
$v_{i,j}$ be the vertex of $G$ in the $i$-th row and $j$-th
column. Each coloring of $G$ can also be considered as an $m$ by
$n$ matrix.
\begin{lemma}\label{n+1}
Let $m\geq 2$ and $n\geq 3$ be two positive integers, where $m\leq n$.
If there exists a locating $(n+1)$-coloring of $G:=K_m\square K_n$, then let $C$ be its
corresponding coloring matrix. Then different rows of
$C$ have different missing colors.
\end{lemma}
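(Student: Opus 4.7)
The approach is by contradiction: suppose two rows $r$ and $r'$ of $C$ share the same missing color $c$. The plan is to combine a rigid description of color codes in $K_m\square K_n$ with a count of colorful vertices to reach a contradiction.

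First, I would analyze the code of a generic vertex $v_{i,j}$ of color $k$, writing $m_i$ for the unique color missing from row $i$. Since any two vertices of $K_m\square K_n$ lie at distance $0$, $1$, or $2$, a direct check shows that the value of $c_{{}_\Pi}(v_{i,j})$ at position $k'$ is $0$ when $k'=k$, equals $2$ when $k'=m_i$ and $m_i$ does not occur in column $j$, and equals $1$ in every other case. In particular, both rows $r$ and $r'$ use exactly the colors of $[n+1]\setminus\{c\}$, and for each such color $k$ the two vertices of color $k$ in rows $r$ and $r'$ can only differ in the single coordinate labelled $c$; they differ there precisely when $c$ appears in exactly one of their two columns. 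The locating property, applied to each of the $n$ colors $k\neq c$, then forces exactly one of the two columns to lie in $S$, where $S$ is the set of columns containing color $c$. Letting $f_r,f_{r'}:[n+1]\setminus\{c\}\to[n]$ record the column of each color in rows $r$ and $r'$, these are bijections, and the condition above says that $f_r^{-1}(S)$ and $f_{r'}^{-1}(S)$ partition $[n+1]\setminus\{c\}$. Hence $s:=|S|$ satisfies $2s=n$.

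Next, I would count colorful vertices. Because the closed neighborhood of $v_{i,j}$ equals row $i$ together with column $j$, and row $i$ already contains every color except $m_i$, a vertex is colorful iff $m_i$ appears in its column. Thus row $i$ carries exactly $t_{m_i}$ colorful vertices, where $t_k$ denotes the number of vertices of color $k$. Rows $r$ and $r'$ together therefore supply $2s=n$ colorful vertices whose colors, by the first step, are $f_r^{-1}(S)\cup f_{r'}^{-1}(S)=[n+1]\setminus\{c\}$. Observation~1(a) forbids two colorful vertices from sharing a color, so any colorful vertex outside rows $r,r'$ must bear the color $c$, leaving at most one such vertex in total. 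However each of the remaining $m-2$ rows supplies at least one colorful vertex, since every color of the palette is used and so $t_{m_i}\geq 1$. Therefore $m-2\leq 1$, that is, $m\leq 3$. On the other hand, from $n\geq 3$ together with $n=2s$ being even we obtain $s\geq 2$, and $s=t_c\leq m-2$ because color $c$ is absent from rows $r$ and $r'$, giving $m\geq 4$. These inequalities are incompatible, which is the desired contradiction.

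The main obstacle is the rigid color-code analysis in the first step: the key is to recognise that in $K_m\square K_n$ the color code of a vertex is essentially determined by its own color together with the single bit of information of whether its row's missing color appears in its column, so that two same-colored vertices in rows with equal missing colors are distinguished only in one coordinate. Once $2s=n$ is in hand, the counting in the subsequent steps is short.
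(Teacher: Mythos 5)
Your proof is correct and follows essentially the same route as the paper's: both argue by contradiction that the shared missing color must occupy exactly half the columns (so that all other colors become full), and then invoke Observation~1(a) to cap the number of colorful vertices. The only difference is cosmetic — the paper finishes by finding two extra full colors in a single row where the shared missing color appears, while you count at least one colorful vertex in each of the remaining $m-2$ rows and play $m\leq 3$ against $s=t_c\leq m-2$ with $s\geq 2$; both yield the same contradiction.
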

\begin{proof}{
Each row has one missing color. Since each color appears in at
least one row, the missing color of each row appears in some other rows. Hence, each row contains some full colors.\\
Suppose on the contrary, and without loss of generality, that
first and second rows have the same missing color, say ``$n+1$".
For each $i\in [n]$, there are two vertices in the first and
second rows of $G$ with color $i$. They have neighbors in all of
the color classes $[n]\setminus \{i\}$. Since the coloring is
locating, the color $n+1$ should appear in exactly one of the
columns corresponding to these two vertices. This holds for each
$i\in [n]$. Hence, the color $n+1$ should appear in exactly half
of the columns of $C$. This also implies that $n$ is an even
integer. Thus, in each row with the missing color $n+1$, half of the
colors are full. Particularly, half of the colors $1, 2, ..., n$
are full in the first row, and the remaining are full in the
second row. Since repeated full colors are not allowed, the color
$n+1$ must appear in the third row. The missing color of the third row appears in the first and second rows, in two different columns. Hence, the third row
contains at least two full colors. This implies that
there are at least $n+2$ full colors in $n+1$ color classes,
which is a contradiction.
 }\end{proof}
%%%%%%%%%%%%%%%%%%%%%%%%%%%%%%%%%%%%%%%%%%%%%%%%%%%%%%%%%%%%%%%%%%%%%%%%%%%%%%%%%%%%%%%%%%%%%%%%%%%%%%%%
Note that $\Cchi_{_L}(K_2\square K_2)=\Cchi_{_L}(C_4)=4$. In
general we have the following result.
\begin{thm}
For two positive integers $m\geq 2$ and $n\geq 3$, where $m\leq
n$, let
$$m_0:=\max\{k~|~k\in \Bbb{N},~k(k-1)-1\leq n\}.$$
   \begin{itemize}
   \item [{\rm (a)}] If $m\leq m_0-1$, then $\Cchi_{_L}(K_m\square K_n)=n+1$,
   \item [{\rm (b)}] If $m_0+1\leq m\leq {n\over 2}$, then $\Cchi_{_L}(K_m\square K_n)=n+2$.
   \end{itemize}
\end{thm}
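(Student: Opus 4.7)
The plan is to split the work into a lower bound (using Observation 1(b), Lemma 3, and a count of colorful vertices) and an explicit upper bound (an $m\times n$ coloring matrix) for each part.

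\textbf{Lower bounds.} For both (a) and (b), the $m\geq 2$ rows of $G:=K_m\square K_n$ form disjoint copies of $K_n$, so Observation 1(b) gives $\Cchi_{_L}(G)\geq n+1$. To push this to $n+2$ in case (b), I would argue by contradiction: assume a locating $(n+1)$-coloring with matrix $C$ exists. Because every distance in $K_m\square K_n$ lies in $\{0,1,2\}$ and $d(v_{i,j},V_c)=2$ precisely when $c\notin R_i\cup C_j$ (where $R_i$, $C_j$ denote the color sets of row $i$ and column $j$), the ``distance-$2$ set'' of $v_{i,j}$ equals $[n+1]\setminus(R_i\cup C_j)$. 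By Lemma 3 the rows have distinct missing colors $m_1,\ldots,m_m$, so $R_i=[n+1]\setminus\{m_i\}$, and $v_{i,j}$ is colorful iff $m_i\in C_j$. Since $m_i$ appears exactly once in each of the other $m-1$ rows, and columns are $K_m$'s, color $m_i$ occupies $m-1$ pairwise distinct columns; hence each row contains exactly $m-1$ colorful vertices, giving $m(m-1)$ total. By Observation 1(a) these carry distinct colors, so $m(m-1)\leq n+1$, i.e.\ $m\leq m_0$. This contradicts $m\geq m_0+1$ and yields $\Cchi_{_L}(G)\geq n+2$.

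\textbf{Upper bound in (a).} From $m\leq m_0-1$ one deduces $m(m+1)\leq n+1$, leaving enough unused colors for a construction. I would build $C$ so that row $i$ misses color $i$, and predetermine the $m-1$ cells $(i',j_{i,i'})$ (with $i'\neq i$) where color $i$ appears, choosing the $j_{i,i'}$ so the $m(m-1)$ ``colorful positions'' $v_{i,j_{i,i'}}$ lie in pairwise distinct columns and can be assigned $m(m-1)$ distinct special colors drawn from $\{m+1,\ldots,m^2\}\subseteq[n+1]$. The remaining entries are then completed to a proper coloring by a greedy / Latin-rectangle extension respecting each row's missing color. Locating follows from the dichotomy in the lower-bound analysis: two non-colorful same-color vertices lie in different rows and thus have distinct singleton distance-$2$ sets $\{m_i\}\neq\{m_{i'}\}$, while colorful vertices are all assigned distinct colors by design.

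\textbf{Upper bound in (b).} With $n+2$ colors each row misses two colors, and the hypothesis $m\leq n/2$ permits assigning the rows pairwise-disjoint missing pairs, e.g.\ $\{a_i,b_i\}=\{2i-1,2i\}$. The distance-$2$ set of $v_{i,j}$ becomes $\{a_i,b_i\}\setminus C_j$, and disjointness of the pairs separates any two non-colorful same-color vertices lying in different rows. The ``colorful'' cells, where $\{a_i,b_i\}\subseteq C_j$, must again receive distinct colors, which is arranged by a construction analogous to (a)---now with an extra color available, so more flexibility.

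\textbf{Main obstacle.} The lower bound is a clean double-count. The real work is in the upper-bound constructions, especially (a): the matrix must simultaneously respect row missing-color patterns, column distinctness ($K_m$ constraint), and the distinctness of colors at all $m(m-1)$ colorful cells, all of which interact. The narrow slack between $m\leq m_0$ (which counting alone permits) and $m\leq m_0-1$ (which the theorem claims constructively) indicates that the construction is close to tight and will most likely proceed by cases or by an inductive/Latin-rectangle argument rather than a single closed-form formula.
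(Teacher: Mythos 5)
Your lower bounds are correct and coincide with the paper's argument: Observation 1(b) gives $n+1$, and for part (b) the paper likewise combines the lemma on distinct row-missing-colors with the count of $m(m-1)$ colorful vertices (one for each occurrence of a row's missing color in the other $m-1$ rows) and Observation 1(a) to get $m(m-1)\leq n+1$, hence $m\leq m_0$. That half of your proposal is sound.

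The genuine gap is that neither upper-bound construction is actually carried out, and in both cases the feasibility you wave at is precisely the hard content. For (a), you predetermine the cells carrying the missing colors and the $m(m-1)$ colorful cells, assign them special colors, and then invoke ``a greedy / Latin-rectangle extension.'' But completing an $m\times n$ array to a proper coloring in which row $i$ omits exactly color $i$, columns are rainbow, the prescribed cells keep their values, and \emph{no unintended colorful cell is created} (any column that accidentally receives color $i$ makes a new colorful vertex in row $i$, threatening a repeated full color) is a constrained Latin-rectangle completion with no obvious guarantee of success; the paper needs a row-by-row induction maintaining four invariants, an explicit count showing at least $(n-i^2+1)-2(i-1)\geq 1$ admissible columns for each inserted missing color, and a Hall's-theorem perfect-matching argument to place the last $n-2i$ colors. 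Your proposal contains none of this, and you acknowledge as much in your ``Main obstacle'' paragraph. For (b), your disjoint-missing-pairs idea is workable in principle, but you again defer the existence of the array and the handling of colorful cells to ``a construction analogous to (a),'' which is circular since (a) is itself unproven. The paper avoids all of this with the explicit formula $c(v_{i,j})=(i-1)n+j \pmod{n+2}$: because $m\leq n/2$, the two missing colors of each row never meet in a column, so there are no colorful vertices at all, every vertex has a nonempty distance-$2$ set contained in its row's missing pair, and the locating property is immediate. To repair your write-up you would either need to reproduce an induction of the paper's type for (a) and an explicit verified array for (b), or find a genuinely new completion argument; as it stands the proposal establishes only the lower bounds.
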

\begin{proof}{
Let $G:=K_m\square K_n$. By Observation 1(b), we have $\Cchi_{_L}(G)\geq n+1$.\\
If $m=2$, then the following matrix provides a locating
$(n+1)$-coloring of $G$ with the color set $[n+1]$.
\begin{eqnarray*}
\left [
\begin{array}{ccccc}
1 & 2 & 3 & ... & n \\
n+1 & 1 & 2 & ... & n-1
\end{array}
\right]
\end{eqnarray*}
Let $m=3$. If $n=3$, then it is not hard to see that there exists
no locating $4$-coloring of $K_3\square K_3$.
The following matrix $A_1$ gives a locating $5$-coloring of $K_3\square K_3$. Hence $\Cchi_{_L}(K_3\square K_3)=5$.\\
If $n=4$ and $\Cchi_{_L}(K_3\square K_4)=n+1=5$, then by Lemma
\ref{n+1} different rows have different missing colors. Hence,
each row contains two full colors, which is impossible since there
are only five color classes. The following matrix $A_2$ gives a
locating $6$-coloring of $K_3\square K_4$, and hence
$\Cchi_{_L}(K_3\square K_4)=6$. If $n=5$, then the matrix $A_3$
gives a locating $6$-coloring  of $K_3\square K_5$, and hence
$\Cchi_{_L}(K_3\square K_5)=6$.

\begin{eqnarray*}
A_1=\left [
\begin{array}{lll}
1 & 2 & 3 \\
4 & 1 & 2 \\
2 & 5 & 4
\end{array}
\right] ,~~ A_2=\left [
\begin{array}{llll}
1 & 2 & 3 & 4 \\
5 & 1 & 2 & 3 \\
6 & 5 & 1 & 2
\end{array}
\right] ,~~ A_3=\left [
\begin{array}{lllll}
1 & 5 & 3 & 4 & 2 \\
6 & 1 & 5 & 2 & 4 \\
3 & 4 & 2 & 5 & 6
\end{array}
\right]
\end{eqnarray*}
Finally, the following matrix gives a locating $(n+1)$-coloring of
$n\geq 6$, and hence $\Cchi_{_L}(K_3\square K_n)=n+1$.
\begin{eqnarray*}
\left [
\begin{array}{ccccccc}
1 & 2 & 3 & ... & n-2 & n-1 & n \\
n+1 & 1 & 2 & ... & n-3 & n-2 & n-1 \\
2 & 3 & 4 & ... & n-1 & n & n+1
\end{array}
\right]
\end{eqnarray*}
Here after let $m\geq 4$. First assume there exists a locating
$(n+1)$-coloring $c$ of $G$ with the corresponding coloring matrix
$C$. By Lemma \ref{n+1}, missing colors of different rows are
different, and the missing color of each row appears in all of the
other $m-1$ rows. Hence, each row of $C$ contains exactly $m-1$
full colors, and there  are exactly $m(m-1)$ full colors in $C$.
Since there are $n+1$ color classes, we should have $m(m-1)\leq
n+1$. Therefore, if $m>m_0$, then
$\Cchi_{_L}(G)\geq n+2$.\\
Now we provide a locating $(n+2)$-coloring of $G$, when $m_0+1\leq
m\leq {n\over 2}$. Let $c:V(G)\longrightarrow [n+2]$ be a
function, where $c(v_{i,j}):=(i-1)n+j$ ~(mod $(n+2)$). Since
$m\leq {n\over 2}$, by a simple calculation, it can be seen that
$c$ is a proper coloring. For each $i$, $1\leq i\leq m$, the
$i$-th row of the corresponding coloring matrix has two missing
colors $in+1$ and $in+2$, modulo $n+2$. Moreover, since $m\leq
{n\over 2}$, two colors $in+1$ and $in+2$ can not appear in the
same column. This means that there exists no full color. In
other words, each vertex has at least one component that is 2 in its
color code. Since each color is missed in exactly one row, every
two vertices with the same color are in different rows and have
different color codes. Consequently, $c$ is a locating
$(n+2)$-coloring.
% It is easy to see
%that this is proper coloring of $G$ when $m\leq {n\over 2}$. In
%this coloring there exists no full color, each row has two
%missing colors, each color is missing in exactly one row and each
%vertex is non-adjacent to at least one color class.
Hence, $\Cchi_{_L}(G)=n+2$, when $m_0<m\leq {n\over 2}.$\\
To complete the proof, we show that $\Cchi_{_L}(G)=n+1$, when
$m<m_0$. In an inductive way, we provide a locating
$(n+1)$-coloring of $K_m\square K_n$. For this purpose, we
construct an $m$ by $n$ (coloring) matrix on the set $[n+1]$ with
the following properties.
\begin{itemize}
\item [(a)] The entries of each row, and each column are different.
\item [(b)] Different rows have different missing colors.
\item [(c)] There exist no repeated full colors.
\item [(d)] All of the missing colors are also full.
\end{itemize}
Note that the property (a) indicates that the given coloring is
proper. Also, the properties (b) and (c) guarantee that the coloring
is a locating coloring, since two non-colorful vertices with the
same color are in different rows and are non-adjacent to different
color classes. The property (d) is needed for the proof by
induction. If $m=4$, then $m_0\geq 5$ and $n\geq 19$. Consider the
following coloring matrix.
\begin{eqnarray*}
\left [
\begin{array}{cccccccccccccc}
1 & 2 & 3 & 4 & 5 & ... & i & ... & n-5 & n-4 & n-3 & n-2 & n-1 & n \\
n+1 & 1 & 2 & 3 & 4 & ... & i-1 & ... & n-6 & n-5 & n-4 & n-3 & n-2 & n-1 \\
n & n+1 & 1 & 2 & 3 & ... & i-2 & ... & n-7 & n-6 & n-5 & n-4 & n-3 & n-2 \\
4 & 5 & 6 & 7 & 8 & ... & i+3 & ... & n-2 & n-1 & n & 2 & 3 & n+1
\end{array}
\right]
\end{eqnarray*}
In this coloring, the missing colors are $n+1, n, n-1, 1$, and the
full colors are $1, 2, n; n+1, n-4, n-1; n-6, n-3, n-2; 4, 5, 6$.
Thus, there are no repeated full colors, whenever $n\geq 13$, and
properties (a) to (d) hold.\\
Now suppose that the $i$-th row, $4<i\leq m-1$, is completed
such that the properties (a) to (d) hold for the constructed
$i\times n$ matrix. Next, we complete the $(i+1)$-th row.
Without loss of generality and by permuting the rows or symbols if
it is necessary, assume that the missing colors in the first $i$
rows are $1, 2, ..., i$. Each of the first $i$ rows contains $i-1$
full colors, since its missing color appears in other rows. Thus,
there are $i(i-1)$ full colors and $i$ missing colors. Choose a
full color $j$, $j>i$. We want to fill the $(i+1)$-th row with
colors in the set $[n+1]\setminus \{j\}$ in such a way that the
constructed $(i+1)\times n$ matrix satisfies the properties (a) to
(d). \\
By completing the $(i+1)$-th row, $2i$ new colorful vertices
appear, $i$ of them will be in the first $i$ rows (one vertex in
each row) by inserting the colors $1, 2, ..., i$ in the
$(i+1)$-th row, and $i$ of them will be in the $(i+1)$-th row
corresponding to the columns in which $j$ occurs in the previous rows.\\
Let $1\leq k\leq i$. The color $k$ should be inserted in a
suitable column in the $(i+1)$-th row, in such a way that it
creates a new full color in the $k$-th row. Since the colors
$1, 2, ..., i$ are full, to preserve the property (c), these
colors shouldn't be inserted in the columns of the $(i+1)$-th
row in which $j$ occurs in the previous rows. There are $n-i$ columns
not containing $j$. On the other hand, $i(i-1)$ full colors of
the first $i$ rows  appear in the $k$-th row and inserting $k$
in their columns causes repeated full colors. Also, one of these
full colors is $j$ and each column containing $k$ contains at
least one full color. Thus, there are at least
$(n-i)-i(i-1)+1=n-i^2+1$ possible columns in the $(i+1)$-th row
for inserting the color $k$. Assume that the color $1$ is
inserted in a suitable column. After inserting $1$, one new full
color is created in the first row and one of the feasible columns
of the $(i+1)$-th row is occupied by $1$. Hence, for inserting
$2$ there are at least $(n-i^2+1)-2$ possible columns, and finally
for inserting $i$ there are at least $(n-i^2+1)-2(i-1)$ possible
columns. Note that $(n-i^2+1)-2(i-1)\geq 1$, since $i<m<m_0$ and
$m_0(m_0-1)-1\leq n$. Thus, inserting the colors $1, 2, ..., i$
is possible as desired.\\ Inserting each color in the columns in which $j$ occurs in the previous rows, will make that color full. There are $i$ columns
containing the color $j$. Since
\begin{eqnarray*}
(n+1)-i(i-1)-i&=& n+1-i^2 \\
&\geq&m_0(m_0-1)-i^2\\
&\geq& m_0(m_0-1)-(m_0-2)^2\\
&=&3(m_0-2)+2\\
&\geq& 3i+2,
\end{eqnarray*}
there are at least $3i+2$ non full colors. Therefore, it is
possible to insert $i$ non full colors in the $(i+1)$-th
row, and in the columns in which $j$ occurs in the first $i$ rows,
preserving the property (a).\\ Now it remains to insert the
remaining $n-2i$ colors, say $c_1, c_2,..., c_{n-2i}$, in the
remaining $n-2i$ columns, say $C_1, C_2,..., C_{n-2i}$, preserving
the property (a). Let $H:=(X,Y)$ be the bipartite graph with
partite sets $X:=\{C_1, C_2,..., C_{n-2i}\}$ and $Y:=\{c_1,
c_2,..., c_{n-2i}\}$ such that $C_sc_{_r}\in E(H)$, whenever the
color $c_r$ is not occurred in the column $C_s$. Each color $c_r$
is in $i$ rows and each column $C_s$ contains $i$ colors. Thus,
each vertex in $H$ has degree at least $n-3i$. Let $\emptyset
\subset S \subset X$. Since $S\neq\emptyset$, $|N(S)|\geq n-3i$.
If $|N(S)|<|S|$, then $n-3i<|S|$. Thus, $N(S)\neq Y$ and
$|X\setminus S|\leq i-1$. Let $y\in Y\setminus N(S)$ and hence,
$$n-3i\leq |N(y)|\leq |X\setminus S|\leq i-1.$$ Thus, $n\leq 4i-1$
and  $$m_0(m_0-1)-1\leq 4i-1\leq 4(m_0-1)-1.$$ This implies that
$m_0\leq 4$ which is a contradiction, since $4\leq m<m_0$.
Therefore, the Hall's condition holds (Theorem 3.1.11 \cite{West})
and hence $H$ has a perfect matching. Consequently, we obtain a
desired coloring by filling the remaining entries according to
this assignment.
 }\end{proof}

\section{Some open problems}
Note that every proper coloring of $K_m\square K_n$ is equivalent
to an $m$ by $n$ Latin rectangle. Moreover, a locating coloring of
$K_m\square K_n$ is equivalent to an $m$ by $n$ Latin rectangle
in which, for every two cells containing the same symbol, there is
a symbol that appears only in the row or column of one of them.\\
In what follows we present some open problems related to the obtained
results.\\\\
Note that for each given number $n$,
$m_0=\max\{k~|~k\in\Bbb{N},~k(k-1)-1\leq n\}$ is a number close to
$\sqrt n$. If $m=m_0\geq 4$ and $(m_0(m_0-1)-1)+(m_0-2)\leq n<
(m_0+1)m_0-1$, then for each $i$ with $i<m_0$, we have
$$(n-i^2+1)-2(i-1)\geq 1,~~ (n+1)-i(i-1)-i\geq 2m_0-3,$$ and $n\nleqslant 4i-1$.
Thus, by following the proof of the Theorem 3, we can obtain a
locating $(n+1)$-coloring of $K_{m_0}\square K_n$ and hence,
$\Cchi_{_L}(K_{m_0}\square K_n)= n+1$. Therefore, if $m=m_0$, then
the remaining cases for $n$ to investigate
$\Cchi_{{}_L}(K_{m_0}\square K_n)$ are
$$m_0(m_0-1)-1\leq n \leq (m_0(m_0-1)-1)+(m_0-3).$$ Verifying
small cases, encourage us to give the following conjecture.
\begin{conj}
$\Cchi_{_L}(K_{m_0}\square K_n)= n+1$, where $n\geq 3$ and
$m_0=\max\{k~|~k\in \Bbb{N},~k(k-1)-1\leq n\}.$
\end{conj}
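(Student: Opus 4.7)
The plan is to extend the inductive construction of Theorem~3 to cover the boundary case $m=m_0$ left open there. By Observation~1(b) we already have $\Cchi_{_L}(K_{m_0}\square K_n)\geq n+1$, so only the matching upper bound is in question. The remark immediately preceding the conjecture settles the range $n\geq m_0(m_0-1)+m_0-3$, so it suffices to construct a locating $(n+1)$-coloring of $K_{m_0}\square K_n$ for the finitely many remaining values
$$m_0(m_0-1)-1 \leq n \leq m_0(m_0-1)+m_0-4,$$
that is, exactly $m_0-2$ values of $n$ for each $m_0\geq 4$.

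My first step would be to run the induction of Theorem~3 for rows $1$ through $m_0-1$ without modification: the hypotheses $n\nleqslant 4i-1$ and $(n-i^2+1)-2(i-1)\geq 1$ both hold with room to spare for $i\leq m_0-2$. The entire difficulty is concentrated in inserting the last row $m_0$, where the budget estimate $(n-(m_0-1)^2+1)-2(m_0-2)\geq 1$ simplifies to $n\geq m_0^2-3$ and thus fails precisely on our remaining interval. My refinement would recast this final step as a single bipartite matching problem: the left class is the set of $n$ column positions of the new row; the right class is the multiset of $n$ colors to be placed, where a chosen full color $j$ is repeated with multiplicity equal to the number of columns in which $j$ does not yet appear; edges encode admissibility, meaning properness together with the no-repeated-full-color condition forced by property~(c). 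A successful matching, together with a good choice of $j$, immediately yields a coloring satisfying properties (a)--(d).

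The main obstacle will be verifying Hall's condition in this very tight regime. Near $n=m_0(m_0-1)-1$ the column and color degrees in the auxiliary graph are small, so the deficiency form of Hall is essential. Two ingredients look indispensable: a careful selection of the designated full color $j$ of the new row, exploiting that several non-full colors remain available and differ in how their occurrences distribute across rows; and a structural exploitation of property~(d) of Theorem~3, which tightly controls how existing full colors are scattered across columns. I expect the argument to push $n$ down to $m_0(m_0-1)+\Theta(1)$ but not necessarily all the way to $m_0(m_0-1)-1$.

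If the matching argument leaves residual gaps in the very smallest cases (notably $m_0\in\{4,5\}$, corresponding to $n\in\{11,12,19,20,21\}$), my fallback is to supply explicit $m_0\times n$ coloring matrices for those finitely many pairs and verify properties (a)--(d) directly, possibly assisted by a short computer search; combined with the general inductive argument above, this would close the conjecture for all $m_0$. An alternative, more uniform route worth attempting is suggested by the Latin-rectangle reformulation mentioned in Section~4: since $n\approx m_0^2$, one could try to lift an affine or projective incidence structure of order $m_0-1$ into the required array and read off a locating coloring from the design.
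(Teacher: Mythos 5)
This statement is left as an open conjecture in the paper: the authors prove the cases $m\leq m_0-1$ and $m_0+1\leq m\leq n/2$ in Theorem~3, observe in Section~4 that the argument of Theorem~3 extends to $m=m_0$ when $n\geq m_0^2-3$, and explicitly state that the remaining range $m_0(m_0-1)-1\leq n\leq m_0(m_0-1)+m_0-4$ is supported only by ``verifying small cases.'' There is therefore no proof in the paper to compare against, and your proposal does not supply one either. Your reduction to that finite range of $n$ per value of $m_0$ is correct, and your observation that the induction runs cleanly through row $m_0-1$ checks out (for $i\leq m_0-2$ one has $n-i^2+1-2(i-1)\geq m_0+2>0$ and $n>4i-1$). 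But the entire content of the conjecture is the insertion of the last row in the tight regime, and there you only describe a strategy: you recast the step as a bipartite matching problem and then concede both that verifying Hall's condition is the ``main obstacle'' and that you expect the argument to reach only $n=m_0(m_0-1)+\Theta(1)$, ``not necessarily all the way to $m_0(m_0-1)-1$.'' That unverified step is precisely where the paper's own estimate $(n-i^2+1)-2(i-1)\geq 1$ fails, so nothing has been gained over what the paper already establishes.

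The difficulty is not an artifact of the bookkeeping. At $n=m_0(m_0-1)-1$ a completed locating $(n+1)$-coloring must have exactly $m_0(m_0-1)=n+1$ full colors distributed one per color class, i.e.\ every color is full exactly once; there is zero slack, so deficiency versions of Hall applied to a greedily built first $m_0-1$ rows can genuinely fail for an unlucky partial coloring, and one would need either a global construction (your affine/projective-plane suggestion points in the right direction but is not developed) or an explicit verification of the finitely many small cases you list --- which you defer to a ``possible computer search'' rather than carry out. As written, the proposal is a reasonable research plan that correctly identifies where the obstruction lies, but it does not close the conjecture for any of the open values of $n$.
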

By a long detailed argument, we can prove that
$\Cchi_{{}_L}(K_m\square K_n)=n+2$ for ${n\over 2}<m\leq
\lceil{n+3\over 2}\rceil$. For the remaining cases we provide the
following conjecture.
\begin{conj}
 If $\lceil
{n+3\over 2}\rceil <m\leq n$, then $\Cchi_{_L}(K_m\square
K_n)=n+3.$
\end{conj}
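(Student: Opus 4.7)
The plan is to establish both bounds $n+3 \le \Cchi_{_L}(K_m \square K_n) \le n+3$ in the range $\lceil (n+3)/2 \rceil < m \le n$. For the upper bound I would begin with the cyclic construction suggested by Theorem~3: set $c(v_{i,j}) \equiv (i-1)n + j \pmod{n+3}$, so each row uses $n$ consecutive residues modulo $n+3$ and misses exactly three consecutive colors, while each column is an arithmetic progression with common difference $-3$. This is a proper coloring whenever $\gcd(3, n+3)=1$, i.e.\ $3\nmid n$, since the period of $-3$ modulo $n+3$ then exceeds $m-1$. When $3\mid n$ the column-period collapses to $(n+3)/3 < m$ and one must replace the single shift by a hybrid pattern --- for example, partitioning the rows into blocks using two different shifts coprime to $n+3$, or adjusting a few rows by hand. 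Locating would then be verified by showing that for any two same-colored cells $(i,j)$ and $(i',j')$, the set $M_i\cap N_j$ of colors missing from both row $i$ and column $j$ --- equivalently, the set of coordinates where the color code equals $2$ --- is distinct from $M_{i'}\cap N_{j'}$.

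For the lower bound I would argue by contradiction: suppose a locating $(n+2)$-coloring exists. Since $K_m\square K_n$ has diameter two, each entry of a color code is $0$, $1$, or $2$; explicitly, the color-$c'$ entry at $v_{i,j}$ equals $1$ iff $c' \in R_i\cup C_j$ and equals $2$ iff $c' \in M_i\cap N_j$, where $R_i, C_j$ are the color sets of row $i$ and column $j$, and $M_i, N_j$ are their complements (so $|M_i|=2$ and $|N_j|=n+2-m$). Locating therefore demands $M_i\cap N_j \ne M_{i'}\cap N_{j'}$ whenever $c(v_{i,j})=c(v_{i',j'})$. Since $\sum_i |M_i| = 2m \ge n+4 > n+2$, pigeonhole forces some color $\alpha$ to be common to $M_i$ and $M_{i'}$ for two rows. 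Because $\alpha$ must still appear in every column, one can match up same-colored cells in rows $i$ and $i'$ and track how the $M\cap N$ intersections interact through columns in which $\alpha$ sits, aiming to produce two same-colored cells whose intersections coincide, giving the desired contradiction.

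The main obstacle I expect is the lower bound. Because $|N_j|=n+2-m$ varies from roughly $n/2-2$ near the lower end of the range down to as little as $2$ when $m=n$, the column-missing structure changes dramatically across the interval, and no single pigeonhole will deliver the contradiction uniformly; the argument likely splits into sub-cases depending on whether $m$ is close to $n$ or close to $\lceil (n+3)/2\rceil$. A careful double-count of full colors, combined with the Lemma~\ref{n+1}-style analysis (where shared row-missing colors force extra full colors) seems the most promising route. Constructing the $(n+3)$-coloring when $3\mid n$ is technically fiddly, but unlike the lower bound it amounts to combinatorial bookkeeping rather than a genuine obstruction.
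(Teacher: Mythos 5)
This statement is Conjecture 2 of the paper: the authors give no proof of it. They state only that the adjacent range ${n\over 2}<m\leq \lceil{n+3\over 2}\rceil$ can be settled (value $n+2$) ``by a long detailed argument,'' and they explicitly leave the range you address as open. So there is no proof in the paper to compare against, and your proposal has to stand on its own. It does not, for two concrete reasons.

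For the upper bound, the cyclic coloring $c(v_{i,j})\equiv (i-1)n+j \pmod{n+3}$ has a defect more serious than the $3\mid n$ properness issue you flag. With your notation, $|M_i|=3$ and $|N_j|=n+3-m$, and a vertex is colorful exactly when $M_i\cap N_j=\emptyset$. Here $M_i=\{-3i+1,-3i+2,-3i+3\}$ and $N_j=\{j+3,j+6,\ldots,j+3(n+3-m)\}$ modulo $n+3$, so for a fixed row $i$ the columns $j$ with $M_i\cap N_j\neq\emptyset$ number at most $3(n+3-m)$. Hence whenever $m>\frac{2}{3}(n+3)$ --- which covers the upper portion of the conjectured range --- each row contains at least $3m-2(n+3)\geq 1$ colorful vertices. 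For $K_{10}\square K_{10}$ (which satisfies $\lceil 13/2\rceil=7<10\leq 10$ and $3\nmid 10$) this gives $10\cdot 4=40$ colorful vertices, while Observation 1(a) allows at most $n+3=13$ in any locating coloring; equivalently, two colorful vertices of the same color have identical codes. So your construction is not locating precisely where the conjecture lives, and no $(n+3)$-coloring is actually exhibited. For the lower bound, your second paragraph is a statement of intent rather than an argument: after the pigeonhole producing two rows with a common missing color, ``tracking how the intersections interact, aiming to produce two same-colored cells whose intersections coincide'' is exactly the step whose absence makes this a conjecture, and you concede yourself that no uniform argument is in hand. As written, neither inequality is established.
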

The similarity of the structures of $K_m\square C_n$ and
$K_m\square P_n$ is a motivation for the following conjecture.
\begin{conj}
If $m$ and $n$ are sufficiently large, then $\Cchi_{_L}(K_m\square
C_n)=\Cchi_{_L}(K_m\square P_n)$.
\end{conj}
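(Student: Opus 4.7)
The plan is to attack the two inequalities $\Cchi_{_L}(K_m\square C_n)\leq \Cchi_{_L}(K_m\square P_n)$ and $\Cchi_{_L}(K_m\square C_n)\geq \Cchi_{_L}(K_m\square P_n)$ separately. Theorem~2 already gives $\Cchi_{_L}(K_m\square P_n)\in\{m+1,m+2\}$, with the split at $m=n-1$, and by Observation~1(b) the graph $K_m\square C_n$ also contains $n$ disjoint copies of $K_m$, so $\Cchi_{_L}(K_m\square C_n)\geq m+1$ comes for free.

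For the upper bound I would adapt the explicit coloring matrices from the proof of Theorem~2. In the regime $m\leq n-2$ the $(m+2)$-coloring places two distinguished colors $m+1$ and $m+2$ in the first column and alternates two patterns thereafter; for a cycle the natural modification is to place the special symbols $m+1,m+2$ in two columns at asymmetric positions around the cycle (for example at $1$ and $\lfloor n/3\rfloor$) and to reshuffle the alternating patterns near the wrap-around so that (i) columns $n$ and $1$ are derangements and (ii) any two vertices sharing a primary color still differ in at least one of their cyclic distances to a marker. In the regime $m\geq n-1$ the coloring matrix is short, and the inductive construction of Theorem~2 should be adaptable once the single new adjacency between columns $n$ and $1$ is handled.

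For the lower bound, what remains to rule out is a locating $(m+1)$-coloring of $K_m\square C_n$ in the range $m\leq n-2$. Each column uses $m$ of the $m+1$ colors, so exactly one color is missing per column. I would aim for a cyclic analog of Lemma~1 asserting that, for $m$ sufficiently large, no two columns can share a missing color; this immediately forces $n\leq m+1$, contradicting $m\leq n-2$.

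The main obstacle is precisely this cyclic version of Lemma~1. The original argument exploits the path's boundary (the cases $j=1$ or $j+1=n$) to break symmetry, and for interior columns it compares the distances from two displaced vertices to the nearest occurrence of the missing color on either side. On a cycle there is no boundary, and every occurrence of the missing color is reached by two competing paths, one in each direction around the cycle. A successful argument must combine these two cyclic distances, most likely using the parity or divisibility of $n$ together with the abundance of full colors forced by $m\geq 5$, in the spirit of the final pigeonhole step in the second half of Lemma~1. Carrying this refinement through rigorously is where the bulk of the work lies, and is presumably why the statement appears only as a conjecture.
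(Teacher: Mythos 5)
The statement you are addressing is Conjecture 3 of the paper; the authors give no proof of it, so there is nothing to compare your argument against except the supporting results they do prove (Lemma 1, Theorem 2, Observation 1). Measured against the standard of an actual proof, your submission is a research plan rather than an argument: every step that is not already immediate is left unestablished. The lower bound $\Cchi_{_L}(K_m\square C_n)\geq m+1$ via Observation 1(b) is correct but trivial; everything beyond it is deferred. The upper bounds are described only at the level of intent (``reshuffle the alternating patterns near the wrap-around so that \dots any two vertices sharing a primary color still differ in at least one of their cyclic distances to a marker''), with no verification that such a reshuffling exists or that the resulting coloring is locating --- and on a cycle the distance to a color class is the minimum of two arc-distances, so the clean monotone distance formulas that make the path constructions in Theorem 2 easy to check are no longer available.

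The decisive gap is the one you name yourself: the cyclic analogue of Lemma 1. You correctly identify that the paper's proof of Lemma 1 leans on the path's endpoints and on one-sided nearest occurrences of the missing color, neither of which survives on a cycle, but you do not supply a replacement argument; you only list ingredients (parity of $n$, abundance of full colors) that ``most likely'' combine to give one. It is not even clear that the cyclic statement is true as formulated: the jump of $\Cchi_{_L}(C_n)$ from $3$ to $4$ with the parity of $n$ is a warning that wrap-around symmetry can genuinely change the answer, and for even $n$ the symmetric placement of two occurrences of a missing color around the cycle defeats exactly the kind of distance-comparison you propose. Since both the lower-bound lemma and the upper-bound constructions are conjectural, no part of the claimed equality is actually proved; the statement remains, as in the paper, a conjecture.
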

It seems that graphs with bigger diameter have smaller locating
chromatic number. Hence, the obtained results suggest the following
conjecture.
\begin{conj}
For every two connected graphs $G$ and $H$, $\Cchi_{_L}(G\square
H)\leq \max\{\Cchi_{_L}(G),\Cchi_{_L}(H)\}+3.$
\end{conj}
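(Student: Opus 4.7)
The plan is to start from a proper $k$-coloring of $G\square H$ with $k:=\max\{\Cchi_L(G),\Cchi_L(H)\}$, and then add at most three additional colors to make it locating. Locating colorings $c_G$ and $c_H$ of the factors exist by hypothesis, and by Sabidussi's theorem $\Cchi(G\square H)=\max\{\Cchi(G),\Cchi(H)\}\leq k$, so the shift coloring $c_0(g,h):=(c_G(g)+c_H(h))\bmod k$ is a proper $k$-coloring of $G\square H$. The basic tool throughout will be the cartesian distance identity $d_{G\square H}((g,h),(g',h'))=d_G(g,g')+d_H(h,h')$, which expresses distances to color classes of the product in terms of distances in the factors.

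To design the recoloring, I would classify the \emph{bad pairs}: same-color vertices $(g,h)\neq(g',h')$ that share a color code under $c_0$. If $g\neq g'$, the locating property of $c_G$ supplies a class $V_\alpha^G$ with $d_G(g,V_\alpha^G)\neq d_G(g',V_\alpha^G)$, and symmetrically for $h\neq h'$. The strategy is then to plant the three new colors $k+1,k+2,k+3$ as \emph{row markers}, \emph{column markers}, and a universal tiebreaker, in the spirit of the corner recolorings used in the proof of Theorem~1 and the two extra-color columns used in the proof of Theorem~2, so that every bad pair acquires a distinguishing coordinate in its extended color code arising from distance to one of the three new singleton classes.

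The main obstacle is that the color classes of the shift coloring, namely $V_c=\bigcup\{V^G_\alpha\times V^H_\beta:\alpha+\beta\equiv c\pmod{k}\}$, do not interact cleanly with the product distance: $d((g,h),V_c)$ is a minimum of sums $d_G(g,g_1)+d_H(h,h_1)$ rather than a sum of distances to individual factor classes, so the locating property of $c_G$ and $c_H$ does not lift automatically. Consequently, bad pairs can occur even when both coordinates differ, and killing them all with only three extra colors appears to require a delicate case analysis whose complexity grows with $\mathrm{diam}(G)+\mathrm{diam}(H)$. I would first attack the bounded-diameter case by extending the constructive arguments used for $K_m\square K_n$ in Theorem~3, and then try to reduce general $G,H$ to this setting by selecting landmark vertices that stratify $G\square H$ into shells of bounded metric complexity. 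Since Conjecture~2 asserts that $n+3$ is tight for $K_m\square K_n$ near the diagonal, any successful proof must yield exactly the constant $+3$, and I would not be surprised if the full conjecture needs an additional hypothesis such as bounded diameter or a lower bound on the minimum degree of both factors.
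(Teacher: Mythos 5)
This statement is Conjecture~4 of the paper: the authors do not prove it, and offer it only as a problem suggested by their results (Theorem~2 gives $\Cchi_{_L}(K_m\square P_n)\leq m+2$ and Theorem~3 together with Conjecture~2 suggests that the constant $+3$ would be tight for $K_m\square K_n$ with $m$ close to $n$). So there is no proof in the paper to compare yours against, and a complete argument here would be a new result, not a reconstruction.

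More importantly, what you have written is not a proof but a research plan, and the gap is exactly where you locate it yourself. The setup is fine: Sabidussi's theorem, the shift coloring $c_0(g,h)=c_G(g)+c_H(h)\bmod k$, and the distance identity $d_{G\square H}((g,h),(g',h'))=d_G(g,g')+d_H(h,h')$ are all correct. But the entire content of the conjecture is the claim that three additional colors suffice to separate every bad pair, and that step is never carried out: you describe the intention to plant three singleton classes as ``markers,'' observe that the shift classes $V_c=\bigcup\{V_\alpha^G\times V_\beta^H:\alpha+\beta\equiv c\}$ destroy the additivity that made Proposition~1 work, and then defer to a future case analysis, ending with the concession that an extra hypothesis may be needed. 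Note also that three singleton classes contribute only three new coordinates, each a distance to a fixed landmark vertex; since the metric dimension of a cartesian product can be arbitrarily large, there is no a priori reason three landmarks can distinguish all residual bad pairs, so even the general shape of the plan is unsupported. As it stands the proposal establishes nothing beyond the trivial upper bound $\Cchi_{_L}(G\square H)\leq\Cchi_{_L}(G)\Cchi_{_L}(H)$ already in Proposition~1, and the conjecture remains open.
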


%\section{The locating chromatic number of $P_n\times K_t$}

\end{document}